\newtheorem{theorem}{Theorem}
\newtheorem{lemma}{Lemma}
\newtheorem{corollary}{Corollary}
\newtheorem{proposition}{Proposition}
\newtheorem{remark}{Remark}
\DeclareMathOperator{\inte}{int}
\DeclareMathOperator{\diag}{diag}
\DeclareMathOperator{\argmax}{argmax}
\DeclareMathOperator{\argmin}{argmin}
\newcommand{\pr}{\perp}
\newcommand{\lng}{\langle}
\newcommand{\rng}{\rangle}
\newcommand{\lf}{\left}
\newcommand{\rg}{\right}
\newcommand{\R}{\mathbb R}
\newcommand{\f}{\frac}
\newcommand{\SP}{\mathbb S}
\begin{document}

\title{On the spherical convexity of  quadratic functions 
\thanks{This work was supported by CNPq (Grants  302473/2017-3 and 408151/2016-1) and FAPEG.}}

\author{
 O. P. Ferreira\thanks{IME/UFG, Avenida Esperan\c{c}a, s/n, Campus II,  Goi\^ania, GO - 74690-900, Brazil (E-mails: {\tt orizon@ufg.br}).}
\and
S. Z. N\'emeth \thanks{School of Mathematics, University of Birmingham, Watson Building, Edgbaston, Birmingham - B15 2TT, United Kingdom
(E-mail: {\tt s.nemeth@bham.ac.uk}).}
}

\maketitle

\begin{abstract}
 In this paper we study the spherical convexity of quadratic functions on spherically convex sets. In particular, conditions characterizing the  spherical convexity of quadratic functions on spherical convex sets associated to the positive orthants and  Lorentz  cone are given. 

\end{abstract}

\medskip
\noindent
{\bf Keywords:}
  Spheric convexity,  quadratic functions, positive orthant,  Lorentz cone.\\
\medskip
\noindent
 {\bf  2010 AMS Subject Classification:} 26B25, 90C25, 90C33.

\section{Introduction} \label{sec:Int}

In this paper we study the spherical convexity of quadratic functions on spherical convex sets. This problem arises when one tries to
make certain fixed point theorems, surjectivity theorems, and existence theorems for complementarity problems and variational inequalities more explicit (see \cite{IsacNemeth2003,IsacNemeth2004,IsacNemeth2005,IsacNemeth2006}). Other results on this subject can also be found in \cite{IsacNemeth2008}.  In particular, some existence theorems could be reduced to optimizing a quadratic function on the intersection of the sphere and a cone. 
%
Indeed, consider a closed convex cone $K\subseteq\R^n$ with dual $K^*$. Let $F:\R^n\to\R^n$ be a continuous mapping such that 
$G:\R^n\to\R^n$ defined by $G(x)=\|x\|^2F(x/\|x\|^2)$ and $G(0)=0$  is differentiable at $0$. Denote by $DG(0)$ the Jacobian matrix of $G$ at $0$.
By \cite[Corollary~8.1]{IsacNemeth2006} and \cite[Theorem18]{Nemeth2006}, if 
$\min_{\|u\|=1,u\in K}\langle DG(0)u, u\rangle>0$, then the  nonlinear 
complementarity problem defined by $K\ni x\perp F(x)\in K^*$ has a solution. 
Thus, we need to minimize a quadratic form on the
intersection between a cone and the sphere. These sets are exactly the spherically convex sets;  see \cite{FerreiraIusemNemeth2014}. Therefore,
this leads to minimizing quadratic functions on spherically convex sets.  In fact the optimization problem above reduces to the problem of
calculating the scalar derivative, introduced by S. Z. N\'emeth in  \cite{Nemeth1992,Nemeth1993,Nemeth1997},  along cones; see \cite{Nemeth2006}.
Similar minimizations  of quadratic functions on spherically convex sets  are needed in the other settings; see
\cite{IsacNemeth2003,IsacNemeth2004,IsacNemeth2005}.  Apart from the above, motivation of this study is much wider.    For instance,  the
quadratic constrained optimization problem on the sphere
\begin{equation}\label{eq:gp}
	\min \{\langle Qx, x\rangle~:~x\in C\}, \qquad  C\subseteq\SP^{n}, 
\end{equation}
for  a symmetric matrix $Q$,  is a minimal eigenvalue problem, that is, finding  the spectral norm of the matrix \(-Q\) (see, e.g.,~\cite{Smith1994}). The problem \eqref{eq:gp} also contains the trust region problem that appears in many nonlinear programming  algorithms as a sub-problem,  see \cite{DennisSchnabel1983}.

It is worth to point out that when a quadratic function is spherically convex (see, for example,  \cite{FerreiraIusemNemeth2014}), then the spherical local minimum is equal to the global minimum.   Furthermore,  convex  optimization problems posed on the sphere, have a specific underlining algebraic structure that could be exploited to greatly reduce the cost of obtaining the solutions; see \cite{Smith1994,So2011,Zhang2003,Zhang2012}. Therefore, it is natural to consider the problem of determining the spherically convex quadratic functions on spherically convex sets. The goal of the paper is to present conditions satisfied by quadratic functions which are spherically convex on spherical convex sets. Besides,  we present conditions characterizing the spherical convexity of quadratic functions on spherically convex sets associated to the  Lorentz  cones and  the positive orthant cone. 

The remainder of this paper is organized as follows. In Section~\ref{sec:NotBasRes}, we recall some notations and basic results used throughout  the paper. In Section~\ref{Sec: GenProp} we present some general properties satisfied by quadratic functions which are spherically convex. In Section~\ref{sec: CharQFPosiOth} we present a condition characterizing the spherical convexity of quadratic functions on the spherical convex set defined by the positive orthant cone. In Section~\ref{sec: CharQFPCircCone} we present a condition characterizing  the spherical convexity of quadratic functions on spherical convex sets defined by   Lorentz cone. We conclude this paper by making some final
remarks in Section~\ref{sec;FinalRemarks}.

\section{Notations and basic results} \label{sec:NotBasRes}

In this section we present  the notations and  some auxiliary results used throughout the paper. Let $\R^n$ be the $n$-dimensional Euclidean 
space with the canonical inner product $\lng\cdot,\cdot\rng$, norm $\|\cdot\|$. Denote by 
$\R^n_+$ the nonnegative orthant and by $\R^n_{++}$
the positive orthant. The notation $x\pr y$ means that $\lng x,y\rng=0$. Denote by $e^i$ the $i$-th canonical unit vector in $\R^n$. The 
{\it unit sphere} is denoted by 
$$
\SP:=\left\{ x\in \R^{n}~:~\|x\|=1\right\}.
$$
The {\it dual cone} of a cone ${\cal{K}} \subset \R^n$ is the cone 
${\cal{K}}^*\!\!:=\!\{ x\in \R^n : \langle x, y \rangle\!\geq\! 0, ~ \forall \, y\!\in\! {\cal{K}}\}.$
Any pointed closed convex cone with nonempty interior will be called {\it proper cone}. ${\cal K}$ is called {\it subdual} if ${\cal K}\subset
{\cal K}^*$, {\it superdual} if ${\cal K}^*\subset {\cal K}$ and {\it self-dual} if ${\cal K}^*={\cal K}$. ${\cal K}$ is called {\it strongly
superdual} if ${\cal K}^*\subset\inte({\cal K})$. The set of all $m \times n$ matrices with real entries is denoted by $\R^{m \times n}$ and
$\R^n\equiv \R^{n \times 1}$. In Section 5 we will also use the identification $\R^n\equiv\R^{n-1}\times\R$, which makes
the notations much easier. The matrix ${\rm I_n}$ denotes the 
$n\times n$ identity matrix.    If $x\in \R^n$ then $\diag (x)$ will denote an
$n\times n$  diagonal matrix with $(i,i)$-th entry equal to $x_i$, for $i=1,\dots,n$.    For  $a\in \R$ and $B\in \R^{(n-1)\times (n-1)}$  we
denote    $\diag(a ,B)\in  \R^{n\times n}$ the matrix defined by 
$$
\diag(a,B):=
\begin{bmatrix}
a & 0 \\
0 & B
\end{bmatrix}.
$$
Recall that a {\it Z-matrix} is a matrix with nonpositive off-diagonal elements. Let ${\cal K} \subset \R^n $ be  a  pointed closed convex cone  with nonempty interior,  the  {\it ${\cal K}$-Z-property} of a matrix $A\in\R^{n\times n}$ means that 
$\lng Ax,y\rng\le0$, for any $(x,y)\in C({\cal K})$, where $C({\cal K}):=\{(x,y)\in\R^n\times\R^n:x\in {\cal K},\textrm{ }y\in {\cal K}^*, ~x\pr
y\}$. The matrix  $A\in\R^{n\times n}$ is said to have the  {\it ${\cal K}$-Lyapunov-like property} if $A$ and $-A$  have the ${\cal
K}$-Z-property, and  is said to be  {\it ${\cal K}$-copositive} if  $\lng Ax,x\rng\geq 0$ for all $x\in {\cal K}$. If ${\cal K}=\R^n_+$, then the
${\cal K}$-Z-property of a matrix coincides with the matrix being a Z-matrix and the ${\cal K}$-Lyapunov-like property with the matrix being
diagonal.

The intersection curve of a plane though the origin of \(\R^{n}\) with the sphere \( \SP\) is called a { \it geodesic}.   A geodesic segment   is said to be {\it minimal} if its arc length is 
equal to the intrinsic  distance between its end points, i.e., if 
\(
\ell(\gamma):=\arccos \langle  \gamma(a), \gamma(b)\rangle, 
\)
where  \(\gamma: [a, b]\to \SP\) is a parametrization of the geodesic segment. {\it Through the paper we will use the same terminology for a geodesic and its parameterization.} The set \(C\subseteq \SP\) is said to be {\it spherically convex} if for any \(x\), \(y\in C\) 
all the minimal geodesic segments joining \(x\) to \(y\)  are contained in \(C\).  Let \( C\subset \SP\) be a spherically convex set and $I\subset \R$ an interval.   The following result is proved in  \cite{FerreiraIusemNemeth2013}.
\begin{proposition} \label{pr:ccs}
Let   $K_C:=\left\{ tp \, :\, p\in C, \; t\in [0, +\infty) \right\}$ be the cone generated by the set $C \subset \SP^n$. The set   $C $ is   spherically convex if and only if  the associated  cone  $K_C$ is convex and pointed.
\end{proposition}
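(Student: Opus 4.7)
The plan is to reduce both implications to a single elementary geometric fact: for two non-antipodal unit vectors $p, q$, the points of the (unique) minimal geodesic segment joining them are exactly the normalized nonnegative combinations $(\alpha p + \beta q)/\|\alpha p + \beta q\|$ with $\alpha, \beta \geq 0$ and $(\alpha,\beta) \neq (0,0)$. This is obtained by parametrizing the geodesic inside the $2$-plane $\spa\{p,q\}$ as $\gamma(s) = \cos(s)\, p + \sin(s)\, w$ for a suitable unit $w$, and checking that the coefficients of $p$ and $q$ in the resulting linear combination remain nonnegative on the minimal arc. Combined with the trivial identity $K_C \cap \SP^n = C$, this parametrization is the workhorse in both directions.

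For the forward implication, suppose $C$ is spherically convex. To show convexity of $K_C$, take $x = sp,\, y = tq \in K_C \setminus \{0\}$ with $p, q \in C$ and $s, t > 0$; for $\lambda \in [0,1]$ we have $\lambda x + (1-\lambda) y = \alpha p + \beta q$ with $\alpha = \lambda s,\ \beta = (1-\lambda) t \geq 0$, and provided $p \neq -q$, the normalized vector lies on the minimal geodesic from $p$ to $q$, hence in $C$, so $\lambda x + (1-\lambda) y \in K_C$. For pointedness, if $0 \neq x \in K_C \cap (-K_C)$, write $x = sp = -tq$ with $s, t > 0$ and $p, q \in C$; taking norms forces $s = t$ and $p = -q$. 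But then spherical convexity requires $C$ to contain every minimal geodesic from $p$ to $-p$, i.e.\ every great semicircle through $p$, whose union is all of $\SP^n$; this contradicts the implicit hypothesis that $C$ is a proper subset of the sphere.

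For the backward implication, suppose $K_C$ is convex and pointed. Pointedness immediately rules out antipodal points: if $p, -p \in C$ then $p \in K_C \cap (-K_C) = \{0\}$, contradicting $p \in \SP^n$. Thus any two points $p, q \in C$ admit a unique minimal geodesic, whose points have the form $(\alpha p + \beta q)/\|\alpha p + \beta q\|$ with $\alpha, \beta \geq 0$; convexity of the cone $K_C$ yields $\alpha p + \beta q \in K_C$, and normalization places the result in $K_C \cap \SP^n = C$. Hence $C$ is spherically convex. The one genuine subtlety — and the main obstacle to a completely clean statement — is the antipodal edge case: the biconditional tacitly assumes $C \neq \SP^n$, since $C = \SP^n$ is spherically convex while $K_C = \R^n$ is not pointed.
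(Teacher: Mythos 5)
Your argument is correct and is essentially the canonical proof of this fact: the paper itself gives no proof of Proposition~\ref{pr:ccs} (it is quoted from \cite{FerreiraIusemNemeth2013}), and the standard argument proceeds exactly as yours does, by identifying the minimal arc between non-antipodal unit vectors with the normalized nonnegative combinations and combining this with the identity $K_C\cap\SP=C$, with pointedness corresponding to the absence of antipodal pairs in $C$. Your closing caveat is also legitimate: under the literal definition used here the whole sphere is spherically convex while $K_{\SP}=\R^n$ is not pointed, so the equivalence must be read for proper subsets $C\subsetneq\SP$ (the convention of the cited source); just note that this same exclusion is what lets your convexity step dispense with the case $p=-q$, so it is worth stating that observation before, rather than after, the convexity argument.
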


A function \(f:C\to \R\)  is said to be   {\it  spherically convex (respectively, strictly spherically convex) }
if for any minimal geodesic  segment \(\gamma:I\to C\), the composition  \( f\circ \gamma :I\to \R\) is convex (respectively, strictly convex) in
the usual sense.   The next result is an immediate  consequence of  \cite[Propositions~8 and ~9]{FerreiraIusemNemeth2014}.
\begin{proposition} \label{pr:sgmd}
Let ${\cal K}\subset\R^n$ be a proper cone, ${\cal C}=\inte({\cal K})\cap \SP$ and  \(f:{\cal C} \to \R\) a differentiable function. 
 Then, the following statements are equivalent:
	\begin{itemize}
		\item[(i)]   \(f\) is  spherically convex; 
		\item[(ii)] $ \left\langle Df(x)-  Df(y) , x-y\right\rangle +  (\langle x,  y \rangle -1)\left[ \langle Df(x), x \rangle+  \langle Df(y), y \rangle\right] \geq 0$, for all  $x,  y\in {\cal C}$; 
		\item[(iii)]$ \left\langle D^2f(y)x, x \right\rangle - \langle Df(y), y\rangle \geq 0$,   for all $y\in {\cal C}$, $ x\in \SP$ with $x\pr y$.
	\end{itemize}	
\end{proposition}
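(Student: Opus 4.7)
The strategy is to restrict $f$ to minimal geodesic segments in $\mathcal C$ and reduce each of (i), (ii), (iii) to a classical one-dimensional characterization of convexity. For any non-antipodal $x,y\in\mathcal C$, set $\theta=\arccos\lng x,y\rng\in(0,\pi)$ and parametrize the minimal geodesic from $y$ to $x$ by
$$\gamma(t)=\frac{\sin(\theta-t)}{\sin\theta}\,y+\frac{\sin t}{\sin\theta}\,x,\qquad t\in[0,\theta].$$
A short trigonometric computation shows $\|\gamma(t)\|=\|\gamma'(t)\|=1$ and, crucially, $\gamma''(t)=-\gamma(t)$. Every unit tangent vector $v$ at $y$ (i.e.\ $v\in\SP$ with $v\pr y$) is realized as $\gamma'(0)$ for some such $\gamma$.

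Let $h:=f\circ\gamma$. The chain rule and $\gamma''=-\gamma$ give
$$h'(t)=\lng Df(\gamma(t)),\gamma'(t)\rng,\qquad h''(t)=\lng D^2f(\gamma(t))\gamma'(t),\gamma'(t)\rng-\lng Df(\gamma(t)),\gamma(t)\rng.$$
For (i)$\Leftrightarrow$(iii), (i) is exactly the statement that $h$ is convex on every $[0,\theta]$, i.e.\ $h''\ge 0$ throughout. Evaluating $h''$ at $t=0$ gives the inequality in (iii); conversely, since $\gamma'(t)\in\SP$ and $\gamma'(t)\pr\gamma(t)$, (iii) forces $h''(t)\ge 0$ along every minimal geodesic.

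For (i)$\Leftrightarrow$(ii), since $h$ is arc-length parametrized and every sub-segment of a minimal geodesic is again a minimal geodesic, convexity of $h$ is equivalent to $h'$ being non-decreasing on $[0,\theta]$, and, as $x,y$ range over $\mathcal C$, this reduces to the single endpoint inequality $h'(\theta)\ge h'(0)$ for every pair $x,y\in\mathcal C$. A direct computation yields
$$\gamma'(0)=\frac{x-\lng x,y\rng y}{\sin\theta},\qquad\gamma'(\theta)=\frac{\lng x,y\rng x-y}{\sin\theta},$$
and substituting into $\sin\theta\,[h'(\theta)-h'(0)]$ produces exactly the expression in (ii) after adding and subtracting $\lng Df(x),x\rng+\lng Df(y),y\rng$ in order to complete the term $\lng Df(x)-Df(y),x-y\rng$.

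The main obstacle is this last algebraic rearrangement: the cross-terms $\lng Df(x),y\rng$ and $\lng Df(y),x\rng$ must be regrouped so that (ii) emerges in its symmetric form, with the correction factor $\lng x,y\rng-1$ absorbing the ``spherical'' curvature contribution. Everything else is standard calculus on $\SP$ together with the great-circle identity $\gamma''=-\gamma$.
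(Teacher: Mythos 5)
Your argument is correct. Note, however, that the paper does not prove this proposition at all: it is stated as an immediate consequence of Propositions~8 and~9 of the cited reference \cite{FerreiraIusemNemeth2014}, so your proposal supplies a self-contained proof where the paper simply defers to earlier work. Your route is the natural one and the computations check out: the great-circle parametrization $\gamma(t)=\frac{\sin(\theta-t)}{\sin\theta}y+\frac{\sin t}{\sin\theta}x$ is indeed unit-speed with $\gamma''=-\gamma$, so $h''(t)=\lng D^2f(\gamma(t))\gamma'(t),\gamma'(t)\rng-\lng Df(\gamma(t)),\gamma(t)\rng$, which gives (i)$\Leftrightarrow$(iii) once one observes that ${\cal C}=\inte({\cal K})\cap\SP$ is relatively open, so every unit $v\pr y$ is the initial velocity of a short minimal arc inside ${\cal C}$ (pointedness of ${\cal K}$ also rules out antipodal pairs, so minimal geodesics between points of ${\cal C}$ are unique and lie in ${\cal C}$). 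For (i)$\Leftrightarrow$(ii), the identity $\sin\theta\,[h'(\theta)-h'(0)]=\lng x,y\rng\lng Df(x),x\rng+\lng x,y\rng\lng Df(y),y\rng-\lng Df(x),y\rng-\lng Df(y),x\rng$ is exactly the left-hand side of (ii), and your reduction of convexity to the endpoint-derivative inequality via sub-segments (each again a minimal geodesic with endpoints in ${\cal C}$) is sound, since monotonicity of $h'$ characterizes convexity for differentiable $h$. What your direct proof buys over the paper's citation is transparency about hypotheses: the equivalence with (ii) needs only differentiability, whereas (iii) tacitly requires $f$ to be twice differentiable, a caveat inherited from the paper's statement rather than a flaw in your argument.
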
 
It is well known that  if  $Q\in \R^{n\times n}$ is  an orthogonal matrix, then $Q$ defines a linear orthogonal mapping,  which is an isometry of
the sphere.  In the following remark we state some  important properties of the isometries of the sphere, for that,  given  $  {\cal C}\subset
\SP$ and  $Q\in \R^{n\times n}$, we define
$$
 Q { {\cal C}}:=\{ Qx ~:~ x\in {\cal C}\}.
$$
\begin{remark}  \label{re:isom}
Let $Q\in \R^{n\times n}$  be  an orthogonal matrix, i.e., $Q^T=Q^{-1}$, ${\cal C}_1$ and ${\cal C}_2$ be spherically convex sets. Then  ${\tilde
{\cal C}}_2:= Q { {\cal C}_2}$   is a spherically convex set.  Hence, if     ${\tilde {\cal C}}_2\subset \tilde{\cal C}_1$ and   $ f:{\tilde {\cal
C}}_1\to\R$ is a spherically convex function, then   $h:= f\circ Q :{\cal C}_2\to\R$ is also a   spherically convex function.  In
particular,  if ${\tilde {\cal C}}_2= \tilde{\cal C}_1$ then, $f: \tilde{\cal C}_1\to\R$  is spherically convex if,  only if,   $h:= f\circ Q : {\cal C}_{2} \to\R$ is spherically convex.
 \end{remark}
 We will show next a useful property of proper cones which will be used in the Section~\ref{sec: CharQFPCircCone}.
 \begin{lemma}\label{p-l}
	Let ${\cal K}\subset\R^n$ be a proper cone. If $x\in \SP$ and $y\in {\cal K}\cap \SP$ such that $x\pr y$, then $x\notin
	\inte({\cal K}^*)\cup-\inte({\cal K}^*)$. 
\end{lemma}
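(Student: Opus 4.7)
The approach is a straightforward proof by contradiction, using a standard characterization of the interior of the dual of a proper cone.

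First I would recall (or prove in one line) the following fact: since $\mathcal{K}$ is a proper cone, its dual $\mathcal{K}^*$ is also proper, and
\[
 \inte(\mathcal{K}^*)=\{v\in\R^n : \langle v, z\rangle>0 \text{ for all } z\in\mathcal{K}\setminus\{0\}\}.
\]
The ``$\subseteq$'' inclusion is the only one needed. To see it, fix $v\in\inte(\mathcal{K}^*)$ and choose $\varepsilon>0$ with $B(v,\varepsilon)\subset\mathcal{K}^*$. Given $z\in\mathcal{K}\setminus\{0\}$, the point $v-\tfrac{\varepsilon}{2\|z\|}z$ lies in $\mathcal{K}^*$, so pairing it with $z$ gives $\langle v,z\rangle\ge \tfrac{\varepsilon}{2}\|z\|>0$.

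With this in hand, the proof of the lemma is immediate. Assume for contradiction that $x\in\inte(\mathcal{K}^*)$. Since $y\in\mathcal{K}\cap\SP$, we have $y\in\mathcal{K}$ and $y\neq 0$, so the characterization above forces $\langle x,y\rangle>0$. This contradicts the assumption $x\pr y$, i.e.\ $\langle x,y\rangle=0$. The case $x\in -\inte(\mathcal{K}^*)$ is entirely symmetric: applying the same reasoning to $-x\in\inte(\mathcal{K}^*)$ yields $\langle -x,y\rangle>0$, again contradicting $x\pr y$.

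There is no genuine obstacle here; the only subtlety worth flagging is that the argument requires $\mathcal{K}$ to be a proper cone (so that the characterization of $\inte(\mathcal{K}^*)$ by strict positivity on $\mathcal{K}\setminus\{0\}$ is valid) together with the fact that $y\neq 0$, which is guaranteed by $y\in\SP$. Both hypotheses are built into the statement of the lemma, so the contradiction goes through cleanly.
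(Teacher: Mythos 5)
Your proof is correct and follows essentially the same route as the paper: the paper simply asserts that $x\in\inte(\mathcal{K}^*)$ forces $\langle x,y\rangle>0$ (and $x\in-\inte(\mathcal{K}^*)$ forces $\langle x,y\rangle<0$), contradicting $x\pr y$, which is exactly your argument with the standard characterization of $\inte(\mathcal{K}^*)$ spelled out in detail.
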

\begin{proof}
	If $x\in\inte({\cal K}^*)$, then $\lng x,y\rng>0$ and if $x\in-\inte({\cal K}^*)$, then $\lng x,y\rng<0$. Hence, $x\in \SP$, $y\in {\cal K}\cap \SP$ and $x\pr y$
	imply $x\notin  \inte({\cal K}^*) \cup- \inte({\cal K}^*) $.
\end{proof}
Let $\mathcal C\subseteq\mathcal D\subseteq\R^n$ and $A\in\R^{n\times n}$. For a quadratic function $f:\mathcal C\to\R$ defined by $f(x)=\lng Ax,x\rng$, we 
will simply use the notation $f$ for the function $\tilde f:\mathcal D\to\R$ defined by $\tilde f(x)=\lng Ax,x\rng$.

\section{Quadratic functions  on spherical convex sets} \label{Sec: GenProp}
 In this section we present some general  properties satisfied  by  quadratic functions which are spherically  convex.
\begin{proposition}\label{pr:CharMon}
	Let ${\cal K}\subset\R^n$ be a proper cone, ${\cal C}=\inte({\cal K})\cap \SP$ and let $f:{\cal C}\to\R$ be defined by 
	$f(x)=\lng Ax,x\rng$, where $A\in\R^{n\times n}$. Then, the following statements are equivalent:
	\begin{itemize}
		\item[(i)] The function $f$ is spherically convex;
		\item[(ii)]  $ \lng Ax,x\rng-  \lng Ay,y\rng \geq 0$, for all  $x\in \SP$ and  $y\in {\cal K}\cap \SP$  with $x\perp y$.
	\end{itemize}
\end{proposition}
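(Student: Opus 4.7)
The plan is to reduce the statement to Proposition~\ref{pr:sgmd}(iii) and then handle a boundary issue by a density/continuity argument, since condition (ii) allows $y \in \mathcal{K} \cap \SP$ while Proposition~\ref{pr:sgmd}(iii) only gives information for $y \in \mathcal{C} = \inte(\mathcal{K}) \cap \SP$.

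First I would compute the differential data for $f(x) = \langle Ax, x\rangle$. Since $f$ is the restriction of a polynomial on $\R^n$, one has $Df(y) = (A + A^T) y$ and $D^2 f(y) = A + A^T$. Using the symmetry of the inner product, $\langle Df(y), y\rangle = 2\langle Ay, y\rangle$ and $\langle D^2 f(y)x, x\rangle = 2\langle Ax, x\rangle$. Substituting into Proposition~\ref{pr:sgmd}(iii), $f$ is spherically convex if and only if
\[
\langle Ax, x \rangle - \langle Ay, y \rangle \geq 0 \qquad \forall\, y \in \inte(\mathcal{K}) \cap \SP,\; x \in \SP,\; x \perp y.
\]
This is exactly (ii), except that (ii) allows $y$ on the boundary of $\mathcal K$.

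The direction (ii)$\Rightarrow$(i) is then immediate: (ii) implies in particular the inequality above for $y \in \inte(\mathcal K)\cap \SP$, which by Proposition~\ref{pr:sgmd} yields spherical convexity of $f$.

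The direction (i)$\Rightarrow$(ii) is the one that requires work, and this is the main (though modest) obstacle. Fix $y \in \mathcal{K} \cap \SP$ (possibly on $\bdr(\mathcal K)$) and $x \in \SP$ with $x \perp y$. Since $\mathcal K$ is a proper cone with $\inte(\mathcal K) \neq \emptyset$, I would pick a sequence $y_k \in \inte(\mathcal K)\cap \SP$ with $y_k \to y$. In general the original $x$ is no longer orthogonal to $y_k$, so I would define
\[
x_k := \frac{x - \langle x, y_k\rangle y_k}{\|x - \langle x, y_k\rangle y_k\|}.
\]
Since $\langle x, y_k\rangle \to \langle x, y\rangle = 0$, the denominator tends to $\|x\| = 1$, so $x_k$ is well defined for large $k$, lies in $\SP$, satisfies $x_k \perp y_k$, and $x_k \to x$. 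Applying the already-established inequality at $(x_k, y_k)$ and passing to the limit using continuity of $z \mapsto \langle Az, z\rangle$ yields $\langle Ax, x\rangle - \langle Ay, y\rangle \geq 0$, establishing (ii).

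The only delicate point is the construction of the approximating pair $(x_k, y_k)$ that simultaneously stays on the sphere and preserves the orthogonality relation; once this is in place, everything else follows by routine continuity and a direct application of Proposition~\ref{pr:sgmd}.
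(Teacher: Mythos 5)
Your proposal is correct and follows essentially the same route as the paper: translate Proposition~\ref{pr:sgmd}(iii) for $f(x)=\langle Ax,x\rangle$ into the inequality $\langle Ax,x\rangle\geq\langle Ay,y\rangle$ for $y\in\inte(\mathcal K)\cap\SP$, $x\in\SP$, $x\perp y$, and then extend to $y\in\mathcal K\cap\SP$ by continuity. Your write-up is in fact a bit more careful than the paper's on two minor points: you use $Df(y)=(A+A^{T})y$ (the paper writes $2Ay$, which is harmless since only $\langle Df(y),y\rangle$ and $\langle D^{2}f(y)x,x\rangle$ enter), and you make the ``by continuity'' step explicit by constructing the approximating orthogonal pairs $(x_k,y_k)$.
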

\begin{proof}
To  prove the equivalence of items  (i) and (ii), note that  ${\cal C}=\inte({\cal K})\cap \SP$ is an open spherically convex set,
$Df(x)=2Ax$ and $D^2f(x)=2A$,  for all $x\in {\cal C}$. Then,  from  item (iii) of
Proposition~\ref{pr:sgmd} we conclude that  $\lng Ax,x\rng\geq\lng Ay,y\rng$, for all  $x\in \SP$ and  $y\in {\cal C}$  with $x\perp y$.  Hence,
by  continuity  this inequality  extends for all    $y\in {\cal K}\cap \SP$  with $x\perp y$.  
\end{proof}
\begin{proposition}\label{pr:Char2}
	Let  ${\cal K}\subset\R^n$ be a proper cone, ${\cal C}=\inte({\cal K})\cap \SP$ and let $f:{\cal C}\to\R$ be defined by 
	$f(x)=\lng Ax,x\rng$, where $A=A^T\in\R^{n\times n}$. 
	The following statements are equivalent:
	\begin{itemize}
		\item[(i)] The function $f$ is spherically convex;
		\item[(ii)]   $ 2\left\lng Ax,y\right \rng \leq \left(  \lng Ax,x \rng +  \lng Ay,y \rng   \right)  \lng x,y \rng$, for all $x,y\in  {\cal K}\cap \SP$.
	\end{itemize}
	As a consequence, if ${\cal K}$ is superdual and $f$ is spherically convex, then $A$ has the ${\cal K}$-Z-property.
\end{proposition}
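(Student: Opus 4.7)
The plan is to derive the inequality in (ii) directly from item (ii) of Proposition~\ref{pr:sgmd} by specializing to $f(x)=\lng Ax,x\rng$ with $A=A^\top$, and then to use continuity to extend the resulting inequality from the open set ${\cal C}=\inte({\cal K})\cap\SP$ to ${\cal K}\cap\SP$. The key observation is that for such an $f$ we have $Df(x)=2Ax$, so the symmetry of $A$ collapses the cross terms nicely.

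First, I would compute
\[
\lng Df(x)-Df(y),x-y\rng \;=\; 2\lng Ax,x\rng+2\lng Ay,y\rng-4\lng Ax,y\rng,
\]
using $\lng Ay,x\rng=\lng Ax,y\rng$. Also $\lng Df(x),x\rng=2\lng Ax,x\rng$ and similarly for $y$. Substituting into the characterization in item (ii) of Proposition~\ref{pr:sgmd} and dividing by $2$, the terms $\lng Ax,x\rng+\lng Ay,y\rng$ cancel in a clean way, leaving
\[
\lng x,y\rng\bigl(\lng Ax,x\rng+\lng Ay,y\rng\bigr)-2\lng Ax,y\rng \;\geq\; 0
\qquad \text{for all } x,y\in{\cal C}.
\]
This is precisely inequality (ii) restricted to $x,y\in{\cal C}$. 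Since both sides are continuous in $(x,y)$ and ${\cal C}$ is dense in ${\cal K}\cap\SP$ (as ${\cal K}$ is a proper cone), the inequality extends to all $x,y\in{\cal K}\cap\SP$, giving (i)$\Rightarrow$(ii). Conversely, restricting (ii) back to ${\cal C}\times{\cal C}$ yields the condition of Proposition~\ref{pr:sgmd}(ii), hence $f$ is spherically convex, proving (ii)$\Rightarrow$(i).

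For the consequence, suppose ${\cal K}$ is superdual, so ${\cal K}^*\subset{\cal K}$, and assume $f$ is spherically convex. Take $(x,y)\in C({\cal K})$, i.e., $x\in{\cal K}$, $y\in{\cal K}^*$ and $x\pr y$; the goal is $\lng Ax,y\rng\le 0$. If $x=0$ or $y=0$ there is nothing to prove, so normalize to $x/\|x\|$ and $y/\|y\|$, which both lie in ${\cal K}\cap\SP$ since ${\cal K}^*\subset{\cal K}$. Applying (ii) to this pair together with $\lng x,y\rng=0$ kills the right-hand side and gives $2\lng Ax,y\rng\le 0$ after rescaling, i.e., the ${\cal K}$-Z-property.

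I do not expect any serious obstacle; the main subtlety is the density argument, which requires only that ${\cal K}$ be a proper cone so that $\inte({\cal K})\cap\SP$ is dense in ${\cal K}\cap\SP$, and the observation that superduality is exactly what is needed so that the vector $y$ paired with $x$ via the ${\cal K}$-Z-condition still lands in ${\cal K}\cap\SP$, allowing one to apply (ii).
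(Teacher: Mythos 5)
Your proposal is correct and follows essentially the same route as the paper: specialize Proposition~\ref{pr:sgmd}(ii) using $Df(x)=2Ax$, simplify via the symmetry of $A$ to obtain the inequality on ${\cal C}\times{\cal C}$, extend by continuity (density of $\inte({\cal K})\cap\SP$ in ${\cal K}\cap\SP$) to get the equivalence, and then obtain the ${\cal K}$-Z-property by taking $x\in{\cal K}\cap\SP$, $y\in{\cal K}^*\cap\SP\subset{\cal K}\cap\SP$ with $x\pr y$ so the right-hand side of (ii) vanishes. Your explicit treatment of the normalization in the last step is a minor refinement of the same argument.
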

\begin{proof}
First note that, by  taking $f(x)=\lng Ax,x\rng$  the inequality in item (ii) of Proposition~\ref{pr:sgmd}  becomes 
$\left\langle Ax-  Ay , x-y\right\rangle +  (\langle x,  y \rangle -1)\left[ \langle Ax, x \rangle+  \langle Ay, y \rangle\right] \geq 0$, for all  $x,  y\in {\cal C}$. Considering that $A=A^T$, some algebraic manipulations   show that $ 2\left\lng Ax,y\right \rng \leq \left(  \lng Ax,x \rng +  \lng Ay,y \rng   \right)$, for all  $x,  y\in {\cal C}$, and  by  continuity  this inequality  extends for all $x,y\in  {\cal K}\cap \SP$. Terefore, the equivalence of items  (i) and (ii)  follows from  item (ii) of Proposition~\ref{pr:sgmd}. For the second part, let $x\in {\cal K}\cap \SP$ and  $y\in  {\cal K}^* \cap \SP\subset {\cal K}\cap \SP$ with $x\pr y$. Since $f$ is
 spherically convex and $x\pr y$, the inequality in item (ii) implies  $\left\lng  Ax,y\right \rng \leq 0$. Therefore, the result follows
 from the definition of ${\cal K}$-Z-property.
\end{proof}
\begin{proposition}\label{r-l}
	Let ${\cal K}\subset\R^n$ be a superdual proper cone, ${\cal C}=\inte({\cal K})\cap \SP$ and $f:{\cal C}\to\R$ be defined by  $f(x)=\lng
	Ax,x\rng$, where $A=A^T\in\R^{n\times n}$.  If $f$ is spherically convex, then  the following statements hold:
	\begin{itemize}
		\item[(i)] If $x,y\in  ({\cal K}\cup -{\cal K}) \cap \SP $ are such that $x\pr y$, then $\lng Ax,x\rng=\lng Ay,y\rng$; 
		\item[(ii)] If $x\in    \inte({\cal K})   \cap \SP $ and $y\in   {\cal K}\cap \SP$ are  such that $x\pr y$, then $Ax\pr y$;  
		\item[(iii)] If $x\in  -\inte({\cal K})   \cap  \SP$ and $y\in   {\cal K}\cap \SP$ are such that $x\pr y$, then $Ax\pr y$.  
	\end{itemize}
\end{proposition}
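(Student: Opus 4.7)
My plan would be as follows.  For part (i), I would apply Proposition~\ref{pr:CharMon}(ii) twice, exploiting its asymmetry in the two arguments: the first ranges over all of $\SP$, while the second must lie in $\mathcal{K}\cap\SP$.  Given $x,y\in(\mathcal{K}\cup-\mathcal{K})\cap\SP$ with $x\perp y$, I would first flip the sign of $y$ if necessary so that it lies in $\mathcal{K}\cap\SP$; orthogonality is preserved and $\langle A(-y),-y\rangle=\langle Ay,y\rangle$, so Proposition~\ref{pr:CharMon}(ii) yields $\langle Ax,x\rangle\geq\langle Ay,y\rangle$.  Swapping the roles of $x$ and $y$, and flipping the sign of $x$ if necessary so that it too sits in $\mathcal{K}\cap\SP$, produces the reverse inequality, and the desired equality follows.

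For part (ii), my strategy is to consider the great-circle geodesic
\[
\gamma(t):=\cos(t)\,x+\sin(t)\,y,
\]
which is parametrized by arclength because $\{x,y\}$ is orthonormal.  The crucial step, and the one I expect to be the main obstacle, is to verify that $\gamma(t)\in\inte(\mathcal{K})\cap\SP$ on a \emph{two-sided} neighborhood of~$0$.  For small $t>0$ this holds because $\cos(t)\,x\in\inte(\mathcal{K})$ while $\sin(t)\,y\in\mathcal{K}$, and the sum of an interior point of a convex cone and a point of that cone stays in the interior.  For small $t<0$, $\gamma(t)$ is merely a small perturbation of $x\in\inte(\mathcal{K})$, hence remains in the interior.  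Once this is established, spherical convexity of $f$ on $\mathcal{C}$ forces $f\circ\gamma$ to be convex on that neighborhood.  A direct expansion, using part~(i) to set $c:=\langle Ax,x\rangle=\langle Ay,y\rangle$, yields
\[
f(\gamma(t))=c+\sin(2t)\,\langle Ax,y\rangle,
\]
whose second derivative is $-4\sin(2t)\langle Ax,y\rangle$.  Since $\sin(2t)$ changes sign at $t=0$, the requirement that this second derivative be nonnegative on both sides of~$0$ forces $\langle Ax,y\rangle=0$.  The whole argument hinges on the two-sided extension of the geodesic: restricting to $t\geq 0$ alone would only recover the one-sided bound $\langle Ax,y\rangle\leq 0$ already implicit in Proposition~\ref{pr:Char2}.

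Part~(iii) I expect to follow at once from (ii) applied to the pair $(-x,y)$: if $x\in-\inte(\mathcal{K})\cap\SP$ then $-x\in\inte(\mathcal{K})\cap\SP$, and $-x\perp y$ with $y\in\mathcal{K}\cap\SP$, so part~(ii) gives $A(-x)\perp y$, which is the same as $Ax\perp y$.
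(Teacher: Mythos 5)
Your proof is correct. Parts (i) and (iii) coincide with the paper's argument: the paper also obtains (i) by applying Proposition~\ref{pr:CharMon}(ii) in both directions (your explicit sign-flipping is exactly what makes that legitimate, since $\lng A(-y),-y\rng=\lng Ay,y\rng$), and it disposes of (iii) "in a similar fashion", which your reduction to (ii) via the pair $(-x,y)$ makes precise. Part (ii) is where you genuinely diverge. The paper never goes back to the definition of spherical convexity: it applies item (i) a second time, to the rotated orthonormal pair $u=(mx-y)/\sqrt{m^2+1}$, $v=(x+my)/\sqrt{m^2+1}$ (the paper's factor $1/(m^2+1)$ is an inessential normalization slip), which lies in ${\cal K}\cap\SP$ for $m$ large precisely because $x\in\inte({\cal K})$; expanding $\lng Au,u\rng=\lng Av,v\rng$ with $A=A^T$ and using $\lng Ax,x\rng=\lng Ay,y\rng$ gives $\lng Ax,y\rng=0$. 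You instead return to convexity of $f\circ\gamma$ along the two-sided geodesic $\gamma(t)=\cos t\,x+\sin t\,y$ and read off $\lng Ax,y\rng=0$ from the sign change of $(f\circ\gamma)''(t)=-4\sin(2t)\lng Ax,y\rng$. Both arguments hinge on the same geometric fact, namely that interiority of $x$ allows a small rotation toward $-y$ without leaving the cone; note that your "crucial step" is immediate by continuity alone, since $\inte({\cal K})$ is open and $\gamma(0)=x$, so the separate treatment of $t>0$ is not needed. What each buys: the paper's version is purely algebraic, reuses the already-proved item (i), and avoids any discussion of minimality of the segment or two-sided neighborhoods; yours is geometrically more transparent, making explicit why interiority upgrades the one-sided bound $\lng Ax,y\rng\le0$ implicit in Proposition~\ref{pr:Char2} to an equality, at the modest cost of checking that $\gamma$ stays in ${\cal C}$ near $t=0$ and that short arcs are minimal.
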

\begin{proof}
For proving item  (i), we use  the equivalence of items (i) and (ii) of Proposition \ref{pr:CharMon}  to obtain that  $\lng Ax,x\rng\geq\lng
Ay,y\rng$ and $\lng Ay,y\rng\geq\lng Ax,x\rng$, for all  $x,y\in  ({\cal K}\cup -{\cal K}) \cap \SP $, and the results follows. To prove item
(ii), given $x\in    \inte({\cal K})   \cap \SP $ and $y\in   {\cal K}\cap \SP$ such that $x\pr y$, define $u=(1/(m^2+1))(mx-y)$ and
$v=(1/(m^2+1))(x+my)$, where $m$ is a positive integer. Since $x\in    \inte({\cal K})   \cap \SP $, if $m$ is large enough, then $(1/m)u\in {\cal
K}$ and therefore $u\in {\cal K}$ too.  It is easy to check that $u,v\in   {\cal K}\cap \SP$ such that $u\pr v$. By using item (i)  twice, we
conclude that  $\lng mAx-Ay,mx-y\rng=\lng Ax+mAy,x+my\rng,$ which after some algebraic transformations, bearing in mind that $A=A^T$, implies
  $Ax\pr y$. We can prove item (iii) in a similar fashion.
  \end{proof}
\begin{corollary}\label{cr:stg}
	Let ${\cal K}\subset\R^n$ be a strongly superdual proper cone, ${\cal C}=\inte({\cal K})\cap \SP$ and let $f:{\cal C}\to\R$ be defined by 
	$f(x)=\lng Ax,x\rng$, where $A=A^T\in\R^{n\times n}$.  If $f$ is spherically convex, then $A$ is ${\cal K}$-Lyapunov-like.
\end{corollary}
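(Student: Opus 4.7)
The plan is to deduce that $\langle Ax,y\rangle=0$ for every pair $(x,y)\in C(\mathcal{K})$, since this immediately gives both the $\mathcal{K}$-Z-property for $A$ and for $-A$, which is the $\mathcal{K}$-Lyapunov-like property by definition.

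First, I would dispose of the trivial cases $x=0$ or $y=0$, and for $x,y\neq 0$ pass to the normalised pair $\hat{x}=x/\|x\|\in\mathcal{K}\cap\SP$, $\hat{y}=y/\|y\|\in\mathcal{K}^*\cap\SP$, with $\hat{x}\perp\hat{y}$; by bilinearity the question is whether $\langle A\hat{x},\hat{y}\rangle=0$. The decisive use of the hypothesis comes here: because $\mathcal{K}$ is \emph{strongly} superdual, $\mathcal{K}^*\subset\inte(\mathcal{K})$, and hence $\hat{y}\in\inte(\mathcal{K})\cap\SP=\mathcal{C}$. So in the pair $(\hat{x},\hat{y})$ it is in fact $\hat{y}$ which lies in the interior of the cone on the sphere, while $\hat{x}$ merely lies in $\mathcal{K}\cap\SP$.

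This is exactly the configuration needed to apply Proposition~\ref{r-l}(ii), but with the roles of the two vectors swapped relative to the way they appear in $C(\mathcal{K})$: taking the first argument to be $\hat{y}\in\inte(\mathcal{K})\cap\SP$ and the second to be $\hat{x}\in\mathcal{K}\cap\SP$, the orthogonality $\hat{y}\perp\hat{x}$ yields $A\hat{y}\perp\hat{x}$, that is $\langle A\hat{y},\hat{x}\rangle=0$. Since $A=A^{T}$, this is the same as $\langle A\hat{x},\hat{y}\rangle=0$, hence $\langle Ax,y\rangle=0$ for all $(x,y)\in C(\mathcal{K})$, and $A$ is $\mathcal{K}$-Lyapunov-like.

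The whole argument is essentially one line once one has Proposition~\ref{r-l}(ii); there is no real obstacle, only the conceptual point of noticing that strong superduality is precisely what promotes the dual-cone vector into the interior of the primal cone, so that Proposition~\ref{r-l}(ii) can be invoked with the arguments reversed. Without strong superduality one only gets the one-sided Z-property from Proposition~\ref{pr:Char2}; the extra inclusion $\mathcal{K}^*\subset\inte(\mathcal{K})$ is what symmetrises the inequality into an equality.
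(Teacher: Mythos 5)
Your argument is correct and is essentially the paper's own proof: take $(x,y)\in C(\mathcal{K})$, use strong superduality to place the dual-cone vector in $\inte(\mathcal{K})\cap\SP$, and apply Proposition~\ref{r-l}(ii) to get orthogonality, hence the $\mathcal{K}$-Lyapunov-like property. Your extra care in swapping the roles of the two vectors and invoking $A=A^{T}$ (plus normalising and discarding zero vectors) only makes explicit what the paper's one-line proof leaves implicit.
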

\begin{proof}
	Let $x\in {\cal K}\cap \SP$ and $y\in {\cal K}^* \cap \SP\subset\inte({\cal K})\cap \SP$ with $x\pr y$. Then, item (ii) of Proposition
	\ref{r-l} implies $Ax\pr y$  and the result follows from the definition of the ${\cal K}$-Lyapunov-like property.
\end{proof}
\begin{proposition}\label{p-c}
	Let ${\cal K}\subset\R^n$  be a superdual proper cone, ${\cal C}=\inte({\cal K})\cap \SP$   and  $f:{\cal C}\to\R$ be defined by  $f(x)=\lng Ax,x\rng$, where $A=A^T\in\R^{n\times n}$. If $A$ is ${\cal K}$-copositive and $f$ is spherically convex, then $A$ is positive 
	semidefinite.
\end{proposition}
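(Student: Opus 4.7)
The plan is to show that $\langle Ax,x\rangle\geq 0$ for every $x\in\SP$, which by homogeneity of the quadratic form is equivalent to positive semidefiniteness. I would split according to whether $x$ lies in $\pm\inte({\cal K}^*)$ or in its complement in $\SP$, and treat these two cases separately using superduality in the first and spherical convexity in the second.

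For the first case, suppose $x\in\inte({\cal K}^*)\cup -\inte({\cal K}^*)$. Because ${\cal K}$ is superdual, ${\cal K}^*\subset{\cal K}$, so $x$ or $-x$ lies in ${\cal K}$. Since $\langle Ax,x\rangle=\langle A(-x),-x\rangle$, the ${\cal K}$-copositivity of $A$ immediately yields $\langle Ax,x\rangle\geq 0$. This case is entirely routine.

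For the second case, suppose $x\in\SP$ but $x\notin\inte({\cal K}^*)\cup -\inte({\cal K}^*)$. I would produce a vector $y\in{\cal K}\cap\SP$ with $x\pr y$; once such $y$ is in hand, Proposition~\ref{pr:CharMon} gives $\langle Ax,x\rangle\geq\langle Ay,y\rangle$, and copositivity gives $\langle Ay,y\rangle\geq 0$, finishing the argument. To build $y$ I would use the standard description $\inte({\cal K}^*)=\{z:\langle z,w\rangle>0,\ \forall w\in{\cal K}\setminus\{0\}\}$, valid since ${\cal K}$ is a proper cone. The assumption $x\notin\inte({\cal K}^*)$ supplies a nonzero $y_1\in{\cal K}$ with $\langle x,y_1\rangle\leq 0$, and $x\notin-\inte({\cal K}^*)$ supplies a nonzero $y_2\in{\cal K}$ with $\langle x,y_2\rng\geq 0$. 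A convex combination $y=\lambda y_1+(1-\lambda)y_2$ with appropriate $\lambda\in[0,1]$ satisfies $\langle x,y\rangle=0$, lies in ${\cal K}$, and (using pointedness of ${\cal K}$ to rule out $\lambda y_1=-(1-\lambda)y_2$ with both nonzero) is itself nonzero; normalizing gives the desired $y\in{\cal K}\cap\SP$.

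The main obstacle is this existence argument in the second case: one must carefully invoke both pointedness and superduality of ${\cal K}$ to guarantee that the witness $y$ is a nonzero element of ${\cal K}$ orthogonal to $x$. Once the witness exists, the rest is a one-line application of Proposition~\ref{pr:CharMon} chained with copositivity, and combining the two cases exhausts $\SP$.
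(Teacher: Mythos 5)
Your proof is correct and follows essentially the same route as the paper: handle $x$ in ($\pm$ the interior of) ${\cal K}^*$ via superduality and copositivity, and for every other $x\in\SP$ construct an orthogonal witness $y\in{\cal K}\cap\SP$ as a suitable convex combination of two vectors of ${\cal K}\setminus\{0\}$ whose inner products with $x$ have opposite signs (pointedness guaranteeing $y\neq0$), then chain Proposition~\ref{pr:CharMon} with copositivity. The only cosmetic differences are that the paper splits at $\pm{\cal K}^*$ rather than at its interior and phrases the existence of the witness as a proof by contradiction instead of your direct construction from the characterization of $\inte({\cal K}^*)$.
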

\begin{proof}
Since $A$ is ${\cal K}$-copositive  we have $\lng Ax ,x\rng\ge0$ for all  $x\in ( {\cal K^*}\cup -{\cal K}^*) \cap   \SP  \subset (  {\cal  K}\cup -  {\cal K}) \cap   \SP $.   Assume that  $x\in \SP \setminus({\cal K}^*\cup -{\cal K}^*)$.  We claim that, there exists $y\in  {\cal K}\cap \SP$ such that $y\perp x$. We proceed to prove the claim. Suppose that there is no such $y$.  Then, we must have that either $\lng u,x\rng<0$ for all $u\in {\cal K} \setminus\{0\}$, or $\lng u,x\rng>0$ for all $u\in {\cal K} \setminus\{0\}$. If there exist $u\in {\cal K} \setminus\{0\}$ with $\lng u,x\rng<0$ and a $v\in {\cal K} \setminus\{0\}$ with $\lng v,x\rng\ge0$, then $\psi(0)<0$ and $\psi(1)\ge0$, where  the continuous function $\psi:\R\to\R$ is defined by $\psi(t)=\lng (1-t)u+tv,x\rng$. Hence, there is an $s\in [0,1]$ such that $\psi(s)=0$. By	the convexity of ${\cal K} \setminus\{0\}$ (${\cal K} \setminus\{0\}$ is spherically convex because ${\cal K} $ is pointed), we conclude  that $(1-s)u+sv\in {\cal K} \setminus\{0\}$. Let $w=(1-s)u+sv$ and $y=w/\|w\|$. Clearly, $y\in {\cal K} \cap \SP$ and $y\perp x$, which contradicts our assumptions. If $\lng u,x\rng<0$ for all $u\in {\cal K} \setminus\{0\}$, then $x\in -{\cal K}^*$, which is a contradiction. If $\lng u,x\rng>0$ for all $u\in {\cal K} \setminus\{0\}$, then $x\in {\cal K} ^*$, which is also a contradiction. Thus, the claim holds. Since $f$ is convex, Proposition~\ref{pr:CharMon} implies that $\lng Ax,x\rng\ge \lng Ay,y\rng$. Since $A$ is $K$-copositive, we have $\lng Ay,y\rng\ge0$ and hence $\lng Ax,x\rng\ge0$. Thus, $\lng Ax,x\rng\ge0$ for all  $x\in \SP$. In conclusion, $A$ is positive semidefinite.
\end{proof}
By using arguments similar to the ones used in  the proof of Proposition~\ref{p-c} we can also prove the following result.
\begin{proposition}
	Let ${\cal K}\subset\R^n$  be a  subdual proper cone, ${\cal C}=\inte({\cal K})\cap \SP$   and  $f:{\cal C}\to\R$ be defined by  $f(x)=\lng Ax,x\rng$, where $A=A^T\in\R^{n\times n}$. If $A$ is ${\cal K}^*$-copositive and $f$ is spherically convex, then $A$ is positive 
	semidefinite.
\end{proposition}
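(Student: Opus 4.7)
The plan is to mirror the proof of Proposition~\ref{p-c}, with the roles of ${\cal K}$ and ${\cal K}^*$ interchanged where the duality appears. First, I would use the ${\cal K}^*$-copositivity of $A$ directly to obtain $\lng Ax,x\rng\geq 0$ for every $x\in ({\cal K}^*\cup -{\cal K}^*)\cap\SP$, since $-{\cal K}^*$-copositivity follows from ${\cal K}^*$-copositivity by the substitution $x\mapsto -x$. This takes care of every $x$ in that region right away, and the remaining task is to handle $x\in\SP\setminus({\cal K}^*\cup -{\cal K}^*)$.

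For such an $x$, I would establish the existence of a $y\in {\cal K}\cap\SP$ with $y\pr x$ by the very same intermediate-value argument used in the proof of Proposition~\ref{p-c}; this sub-argument only uses that ${\cal K}$ is a pointed closed convex cone and does not depend on super- or subduality. Namely, if no such $y$ existed, the continuous function $u\mapsto\lng u,x\rng$ would have to avoid zero on ${\cal K}\setminus\{0\}$; coupled with the convexity and pointedness of ${\cal K}$, a line-segment argument would then force $\lng u,x\rng$ to keep a single strict sign on ${\cal K}\setminus\{0\}$, placing $x$ in ${\cal K}^*$ or $-{\cal K}^*$, contrary to assumption.

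Finally, with such a $y$ in hand, Proposition~\ref{pr:CharMon} applied to the spherically convex $f$ gives $\lng Ax,x\rng\geq\lng Ay,y\rng$, and the subduality hypothesis ${\cal K}\subset {\cal K}^*$ places $y\in {\cal K}^*\cap\SP$, so ${\cal K}^*$-copositivity yields $\lng Ay,y\rng\geq 0$. Chaining the two inequalities, $\lng Ax,x\rng\geq 0$ for every $x\in\SP$, which is positive semidefiniteness of $A$. The only delicate point of the adaptation is precisely this last step: in Proposition~\ref{p-c}, superduality was used to guarantee that the orthogonal vector $y\in{\cal K}$ lay in a cone on which $A$ is copositive, whereas here subduality is needed for the same purpose with the roles of the cones swapped. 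I do not expect any substantive obstacle beyond bookkeeping of which cone each vector belongs to.
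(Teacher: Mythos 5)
Your proposal is correct and is exactly the adaptation the paper intends: the paper gives no separate proof, stating only that the result follows by arguments similar to those of Proposition~4, and your version carries out that adaptation properly (copositivity on ${\cal K}^*$ handles $x\in({\cal K}^*\cup-{\cal K}^*)\cap\SP$ directly, the intermediate-value claim is duality-free, and subduality ${\cal K}\subset{\cal K}^*$ supplies $\lng Ay,y\rng\ge0$ for the orthogonal $y\in{\cal K}\cap\SP$).
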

\section{Quadratic functions on  spherical positive orthant} \label{sec: CharQFPosiOth}

In  this section  we present a  condition characterizing  the  spherical  convexity of quadratic functions on  the spherical convex set associated to the positive orthant  cone.

\begin{theorem} \label{eq:cpoth}
Let ${\cal C}=\SP \cap \R^n_{++}$ and $f:{\cal C}\to\R$ be defined by $f(x)=\lng Ax,x\rng$, where $A=A^T\in\R^{n\times n}$. Then,  $f$ is spherically convex if and only if   there exists $\lambda \in\R$  such that $A=\lambda I_{n}$. In this case,  $f$ is a constant function.
\end{theorem}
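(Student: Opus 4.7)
The \emph{if} direction is immediate: if $A=\lambda I_{n}$, then $f(x)=\lambda\|x\|^{2}=\lambda$ on the whole sphere, so $f$ is constant and trivially spherically convex. For the \emph{only if} direction, the plan is to exploit that $\R^{n}_{+}$ is self-dual (hence superdual) together with the structural results from Section~\ref{Sec: GenProp} in three escalating steps. The first step applies the last sentence of Proposition~\ref{pr:Char2} to conclude that $A$ has the $\R^{n}_{+}$-Z-property; as recalled in Section~\ref{sec:NotBasRes}, this amounts to $a_{ij}\le 0$ for every $i\ne j$.

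The second step pins down the diagonal. I would apply Proposition~\ref{r-l}(i) to the orthogonal pair $x=e^{i}$, $y=e^{j}\in\R^{n}_{+}\cap\SP$, obtaining $a_{ii}=\lng Ae^{i},e^{i}\rng=\lng Ae^{j},e^{j}\rng=a_{jj}$ for all $i\ne j$; hence all diagonal entries share a common value $\lambda$.

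The third and decisive step eliminates the off-diagonal entries. For a fixed pair $i\ne j$, pick any index $k\notin\{i,j\}$ and apply Proposition~\ref{r-l}(i) once more to $x=e^{k}$ and $y=(e^{i}+e^{j})/\sqrt{2}$: both vectors lie in $\R^{n}_{+}\cap\SP$ and are orthogonal. The resulting identity $\lng Ae^{k},e^{k}\rng=\lng Ay,y\rng$ expands, using $A=A^{T}$, to $\lambda=\lambda+a_{ij}$, which forces $a_{ij}=0$. Combining the three steps gives $A=\lambda I_{n}$, and then $f\equiv\lambda$ on $\SP$ as claimed.

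The main obstacle is precisely this last step: the Z-property from Step~1 alone only delivers the one-sided bound $a_{ij}\le 0$, and one genuinely needs a third ``witness'' canonical vector $e^{k}$ orthogonal to the face carrying $e^{i}+e^{j}$ to promote this bound to an equality through Proposition~\ref{r-l}(i). Such an index $k$ is available as soon as $n\ge 3$, so the construction handles all nontrivial cases uniformly.
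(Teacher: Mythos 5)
Your argument is correct for $n\ge 3$ and follows the same skeleton as the paper's proof, testing the same canonical vectors $e^i$, $e^j$, $e^k$ and $(e^i+e^j)/\sqrt{2}$; the difference lies only in which intermediate result carries the load. The paper works directly with the inequality of Proposition~\ref{pr:CharMon}: swapping $x=e^i$, $y=e^j$ gives $a_{ii}=a_{jj}=\lambda$, the pair $x=(e^i-e^j)/\sqrt{2}$, $y=(e^i+e^j)/\sqrt{2}$ (note that $x$ is allowed to lie outside the cone) gives $a_{ij}\le 0$, and the pair $x=(e^i+e^j)/\sqrt{2}$, $y=e^k$ gives the reverse inequality, whence $a_{ij}=0$. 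You instead route everything through the equality in item (i) of Proposition~\ref{r-l}, which applies because $\R^n_+$ is self-dual and hence superdual; this yields $a_{ii}=a_{jj}$ and $\lambda=\lambda+a_{ij}$ directly, so your Step~1 (the $\R^n_+$-Z-property from Proposition~\ref{pr:Char2}) is never actually used and could be deleted, and you avoid the off-cone test vector $(e^i-e^j)/\sqrt{2}$ altogether. The one point to be careful about is your closing claim that the case $n=2$ is trivial: there is then no index $k\notin\{i,j\}$, and in fact the ``only if'' part of the statement fails for $n=2$ (for $A$ with $a_{11}=a_{22}=a$ and $a_{12}=a_{21}=b<0$ one has $f(\cos t,\sin t)=a+b\sin 2t$, which is convex on $(0,\pi/2)$, so $f$ is spherically convex although $A$ is not a multiple of $I_2$). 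This is not a defect relative to the paper, whose proof makes the same implicit assumption $n\ge 3$ by choosing $k$ distinct from $i$ and $j$, but ``trivial'' is the wrong justification for setting that case aside.
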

\begin{proof}
Assume that there exists $\lambda \in\R$  such that $A=\lambda I_{n}$. In this case,  $f(x)=\lambda$, for all $x\in {\cal C}$. Since   any constant function is spherically convex this implication is proved. For the  converse statement,  we  suppose that $f$ is spherically convex. From the equivalence of items (i) and (ii) of Proposition~\ref{pr:CharMon} we have 
	\begin{equation}\label{eq-char}
		\lng Ax,x\rng\ge\lng Ay,y\rng,
	\end{equation}
	for any $y\in  \R^n_{+}$ and any $x\pr y$ with $x,y\in \SP$. First take $x=e^i$ and $y=e^j$. Then, \eqref{eq-char} implies that
	$a_{jj}\ge a_{ii}$. Hence, by swapping $i$ and $j$, we conclude  that $a_{ii}=\lambda$ for any $i$, where $\lambda\in\R$ is a constant. Next take 
	$y=(1/\sqrt{2})(e^i+e^j)$ and 
	$x=(1/\sqrt2)(e^i-e^j)$. This leads to $a_{ij}\le0$, for any $i,j$. Hence, $A=B+\lambda I_{n}$, where $B$ is a Z-matrix with zero diagonal. It is 
	easy to see that inequality \eqref{eq-char} is equivalent to
	\begin{equation}\label{eq-char2}
		\lng Bx,x\rng\ge\lng By,y\rng,
	\end{equation}
for any $y\in  \R^n_{+}$ and any $x\pr y$ with $x,y\in \SP$. Let $i,j$ be arbitrary but different and $k$ different from both $i$ and $j$.  Let $y=e^k$ and $x=(1/\sqrt2)(e^i+e^j)$. Then,  \eqref{eq-char2} implies that $a_{ij}=b_{ij}\ge0$. Together with $a_{ij}\le0$ this gives $a_{ij}=b_{ij}=0$. Hence $A=\lambda I_{n}$ and therefore $f(x)=\lambda$, for any $x\in {\cal C}$, and the proof is concluded. 
\end{proof}
\section{Quadratic functions on Lorentz   spherical convex sets} \label{sec: CharQFPCircCone}

 In this section we present a    condition characterizing  the  spherical  convexity of quadratic functions on  spherical convex sets associated to the    Lorentz  cones.  We begin with the following definition: Let ${\cal L}\subset\R^n$  be  the {\it Lorentz cone} defined by 
\begin{equation} \label{eq:LorentzCone}
	{\cal L}:=\lf\{x\in\R^n~:~x_1\geq\sqrt{x_2^2+\dots+x_n^2}\rg\}.
\end{equation}
  \begin{lemma}\label{lem:CharLor}
	Let  ${\cal L}$ be  the Lorentz cone,   $x:=(x_1,{\tilde x})$ and $y:=(y_1,{\tilde y})$ in $\SP$. Then the following statements 
	hold:
	\begin{enumerate}
		\item[(i)] $ y \in   - {\cal L} \cup {\cal L}$  if and only if   $y_1^2\ge1/2$.  Moreover,   $y_1^2\ge1/2$ if and only if  $\|{\tilde y}\|^2\le1/2$; 
		\item[(ii)] $ y \in -\inte({\cal L}) \cup \inte({\cal L})$  if and only if     $y_1^2>1/2$. Moreover,  $y_1^2>1/2$ if and only if  $\|{\tilde y}\|^2<1/2$;
		\item[(iii)] $ x \notin -\inte({\cal L}) \cup \inte({\cal L})$ if and only if     $ x_1^2\le1/2$. Moreover, $ x_1^2\le1/2$ if, and only if,  $\|{\tilde x}\|^2\ge1/2$;
		\item[(iv)]  If $ y\in  - {\cal L} \cup {\cal L} $ and $x \perp y$ then $ x\notin -\inte({\cal L}) \cap \inte({\cal L})$.  Moreover,   $ x\notin -\inte({\cal L}) \cap \inte({\cal L})$ if, and only if $ x_1^2\le1/2$. Furthermore,  $ x_1^2\le1/2$ if and only if  $\|{\tilde x}\|^2\ge1/2$.
	\end{enumerate}
\end{lemma}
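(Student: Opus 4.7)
The plan is to exploit the unit-norm constraint $y_1^{2}+\|\tilde y\|^{2}=1$ (and analogously for $x$) to convert the cone membership conditions into scalar inequalities on $y_1^{2}$, then to package (iv) as a combination of the self-duality of the Lorentz cone with Lemma~\ref{p-l} and part (iii).

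For (i), I would first observe that, by definition of $\mathcal{L}$, membership $y\in\mathcal{L}\cup(-\mathcal{L})$ is equivalent to $|y_1|\ge\|\tilde y\|$. Squaring (which is legitimate because both sides are nonnegative) and inserting $\|\tilde y\|^{2}=1-y_1^{2}$ yields $y_1^{2}\ge 1/2$. Conversely, from $y_1^{2}\ge 1/2$ one obtains $\|\tilde y\|^{2}=1-y_1^{2}\le 1/2\le y_1^{2}$, hence $|y_1|\ge\|\tilde y\|$; then $y_1\ge 0$ puts $y$ in $\mathcal{L}$ and $y_1\le 0$ puts $y$ in $-\mathcal{L}$. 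The second ``Moreover'' in (i) is immediate from $y_1^{2}+\|\tilde y\|^{2}=1$. For (ii), I would redo the same argument with strict inequalities throughout: membership in $\inte(\mathcal{L})\cup -\inte(\mathcal{L})$ is equivalent to $|y_1|>\|\tilde y\|$, which by the same squaring/unit-norm substitution is equivalent to $y_1^{2}>1/2$, equivalently $\|\tilde y\|^{2}<1/2$.

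Part (iii) is then obtained by logical negation of part (ii) applied to $x$ in place of $y$: $x\notin -\inte(\mathcal{L})\cup\inte(\mathcal{L})$ is the negation of $x_1^{2}>1/2$, i.e., $x_1^{2}\le 1/2$; the equivalence with $\|\tilde x\|^{2}\ge 1/2$ is again immediate from $x_1^{2}+\|\tilde x\|^{2}=1$.

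Finally, for (iv), the key point is that the Lorentz cone is self-dual, so $\mathcal{L}^{*}=\mathcal{L}$. If $y\in\mathcal{L}\cap\SP$ and $x\perp y$, Lemma~\ref{p-l} applied with $\mathcal{K}=\mathcal{L}$ gives $x\notin\inte(\mathcal{L})\cup-\inte(\mathcal{L})$; the case $y\in(-\mathcal{L})\cap\SP$ is identical after replacing $y$ by $-y$, since $x\perp y$ iff $x\perp(-y)$. The remaining equivalences of (iv) are the same as in (iii). The main obstacle is only bookkeeping: one must handle the sign of $y_1$ cleanly and remember that squaring is monotone only on $[0,\infty)$, so the reduction $|y_1|\ge\|\tilde y\|\iff y_1^{2}\ge\|\tilde y\|^{2}$ needs the nonnegativity of both sides. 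Beyond that, the proof is essentially algebraic manipulation together with the one invocation of Lemma~\ref{p-l}.
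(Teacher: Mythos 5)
Your proof is correct and takes essentially the same route as the paper, which simply states that items (i)--(iii) follow from the definitions of $\SP$ and ${\cal L}$ and that item (iv) follows from Lemma~\ref{p-l} (via the self-duality ${\cal L}^*={\cal L}$, which you make explicit) together with item (iii); you merely fill in the elementary details, including the correct reading of the evident typo $-\inte({\cal L})\cap\inte({\cal L})$ as $-\inte({\cal L})\cup\inte({\cal L})$ in (iv).
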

\begin{proof}
	Items (i)-(iii) follow easily from the definitions of $\SP$ and ${ \cal L}$. Item (iv) follows from Lemma \ref{p-l} and item (iii).
\end{proof}
 \begin{remark} \label{eq:othcha}
Let $\tilde{Q}\in \R^{(n-1)\times (n-1)}$ be  orthogonal. Then,  $Q= \diag(1, \tilde{Q})$ is also ortogonal and  $ Q {\cal L}={\cal L}$. Hence,  from Remark~\ref{re:isom} we conclude that  $f: {\cal L} \cap \SP\to\R$ is spherically convex if,   and only if,    $g:= f\circ Q ={\cal L}\cap \SP \to\R$ is spherically convex. 
\end{remark}
\begin{theorem}\label{th:ccflc}
Let ${\cal C}=\inte( {\cal L} )\cap \SP$ and  $f:{\cal C}\to\R$ be  defined by $f(x)=\lng Ax,x\rng$, where $A=A^T\in\R^{n\times n}$. Then  $f$ is spherically convex  if and only if  there exist $a,\lambda \in\R$ with $\lambda \geq a$ such that $A=\diag(a, \lambda I_{n-1})$.
\end{theorem}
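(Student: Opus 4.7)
The plan is to derive both directions from Proposition~\ref{pr:CharMon}, together with the two key geometric facts from Lemma~\ref{lem:CharLor}: for $y \in {\cal L} \cap \SP$ one has $y_1^2 \ge 1/2$, while for any $x \in \SP$ with $x \perp y$ one has $x_1^2 \le 1/2$. These bounds drive the argument on both sides.

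For the easy direction, assume $A = \diag(a, \lambda I_{n-1})$ with $\lambda \ge a$. Using $\|\tilde x\|^2 = 1 - x_1^2$ on $\SP$, I would rewrite $f(x) = \lambda + (a-\lambda)x_1^2$, so that $f(x) - f(y) = (a-\lambda)(x_1^2 - y_1^2)$. For $y \in {\cal L} \cap \SP$ and $x \in \SP$ with $x \perp y$, the two inequalities above give $x_1^2 \le 1/2 \le y_1^2$, and since $a - \lambda \le 0$, the difference is nonnegative. Proposition~\ref{pr:CharMon} then yields the spherical convexity of $f$.

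For the converse, write
$$
A = \begin{bmatrix} a & b\tp \\ b & B \end{bmatrix}
$$
with $b \in \R^{n-1}$ and $B = B\tp \in \R^{(n-1)\times(n-1)}$; the goal is to show $b = 0$ and $B = \lambda I_{n-1}$ with $\lambda \ge a$. Since ${\cal L}$ is self-dual (hence superdual), I would apply Proposition~\ref{r-l}(i) to the pair $x = (1, v)/\sqrt{2}$, $y = (1, -v)/\sqrt{2}$ for each unit $v \in \R^{n-1}$: both lie on the boundary of ${\cal L} \cap \SP$, satisfy $x \perp y$, and expanding $\lng Ax, x\rng = \lng Ay, y\rng$ collapses to $b\tp v = 0$, forcing $b = 0$. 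Next, using Remark~\ref{eq:othcha} with $Q = \diag(1, \tilde Q)$ for an orthogonal $\tilde Q$ that diagonalizes $B$ (noting $Q{\cal L} = {\cal L}$), I would reduce to $A = \diag(a, \mu_1, \ldots, \mu_{n-1})$. Taking $y = e^1$ and $x = e^{i+1}$ in Proposition~\ref{pr:CharMon} then gives $\mu_i \ge a$ for every $i$.

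It remains to show that all $\mu_i$ coincide. For $n \ge 3$ and any distinct $i, j \in \{1, \ldots, n-1\}$, I would test with $y = (e^1 + e^{i+1})/\sqrt{2} \in {\cal L} \cap \SP$ and the one-parameter family $x(s) = \sqrt{s}\,(e^1 - e^{i+1}) + \sqrt{1-2s}\,e^{j+1}$ for $s \in [0, 1/2]$; a direct check gives $x(s) \in \SP$ and $x(s) \perp y$. Proposition~\ref{pr:CharMon} then reads $(a + \mu_i)s + \mu_j(1-2s) \ge (a + \mu_i)/2$, a linear-in-$s$ inequality that vanishes identically at $s = 1/2$; evaluating at $s = 0$ yields $2\mu_j \ge a + \mu_i$, and swapping $i \leftrightarrow j$ gives $2\mu_i \ge a + \mu_j$, whose subtraction forces $\mu_i = \mu_j$. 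The case $n = 2$ is vacuous. Setting $\lambda$ equal to the common value $\mu_i$ completes the proof. The main obstacle is constructing this last parametrized test family: the vanishing of the inequality at the endpoint $s = 1/2$ is what makes the $s = 0$ endpoint deliver a tight linear constraint on the $\mu_i$, and without this specific combination of vectors the argument would fail to equate $\mu_i$ and $\mu_j$.
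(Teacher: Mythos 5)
Your forward direction and the first two steps of the converse are fine and essentially coincide with the paper: writing $f(x)=\lambda+(a-\lambda)x_1^2$ and using $x_1^2\le 1/2\le y_1^2$ is the paper's argument in a different parametrization, and the steps giving $b=0$ (Proposition~\ref{r-l}(i) applied to $x=(1,v)/\sqrt2$, $y=(1,-v)/\sqrt2$), the reduction to $A=\diag(a,\mu_1,\dots,\mu_{n-1})$ via Remark~\ref{eq:othcha}, and the inequality $\mu_i\ge a$ from $y=e^1$, $x=e^{i+1}$ are all correct.

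The last step, however, has a genuine gap. From $2\mu_j\ge a+\mu_i$ and $2\mu_i\ge a+\mu_j$ you may add (obtaining only $\mu_i+\mu_j\ge 2a$) but not subtract: subtracting inequalities of the same sense is invalid, and indeed these two inequalities do not force $\mu_i=\mu_j$ (take $a=0$, $\mu_i=1$, $\mu_j=2$: then $4\ge1$ and $2\ge2$). The failure is not merely algebraic but structural: your family $x(s)=\sqrt{s}\,(e^1-e^{i+1})+\sqrt{1-2s}\,e^{j+1}$ yields an inequality that is linear in $s$ and vanishes at $s=1/2$, so its entire content is the single slope condition $2\mu_j\ge a+\mu_i$; the $s=0$ endpoint adds nothing. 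Consequently, every constraint you derive ($b=0$, $\mu_i\ge a$, $2\mu_j\ge a+\mu_i$ and its swap) is satisfied by $A=\diag(0,1,2)$ in $\R^3$, a matrix which by Theorem~\ref{th:ccflc} is \emph{not} spherically convex on $\inte(\mathcal L)\cap\SP$; hence no rearrangement of these inequalities can complete the proof. What is needed is a test pair whose equality configuration has $x$ on the boundary of $-\mathcal L$ with the same tail as $y$: the paper fixes $y=\frac1{\sqrt2}e^1+\frac12 e^i+\frac12 e^j$ and perturbs $x$ around $-\frac1{\sqrt2}e^1+\frac12 e^i+\frac12 e^j$ by an angle $\theta$, staying in $\SP$ and orthogonal to $y$; dividing by $\sin\theta$ and letting $\theta\to0$ extracts the first-order condition at that equality point, which is the asymmetric inequality $\mu_j\ge\mu_i$, and swapping $i$ and $j$ then gives equality. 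Your family instead perturbs around the equality point $x=\frac1{\sqrt2}(e^1-e^{i+1})\in\mathcal L$, where the first-order information is too weak to separate $\mu_i$ from $\mu_j$.
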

\begin{proof}
Assume that $f$ is spherically convex. Let  $ x, y \in {\cal L}\cap \SP$  with   $x\perp y$  be  defined by
$$
x=\frac{1}{\sqrt{2}}e^{1}+ \frac{1}{\sqrt{2}}e^{i},  \qquad y=\frac{1}{\sqrt{2}}e^{1}- \frac{1}{\sqrt{2}}e^{i},  \qquad i\in \{ 2, \ldots, n\}.
$$
Hence  the item (i) of Proposition \ref{r-l} implies that $\lng Ax,x\rng=\lng Ay,y\rng$. Hence, after computing these inner products, we obtain 
$$
\frac{1}{2}( a_{11}+a_{1i})+ \frac{1}{2}( a_{i1}+a_{ii})= \frac{1}{2}( a_{11}-a_{1i})- \frac{1}{2}( a_{i1}-a_{ii}),  \qquad i\in \{ 2, \ldots, n\}.
$$
Since $A$ is a symmetric matrix, the last equality implies that  $a_{1i}=0$, for all  $i\in \{ 2, \ldots, n\}$. Thus, by letting $a= a_{11}$, we
have $A=\diag(a, \tilde{A})$ with $\tilde{A}\in \R^{(n-1)\times (n-1)}$ a symmetric matrix.  Let $\tilde{Q}\in \R^{(n-1)\times (n-1)}$ be an
orthogonal matrix such that $ \tilde{Q}^T \tilde{A} \tilde{Q}=\Lambda$, where $\Lambda =\diag(\lambda_2, \ldots, \lambda_n)$ and $\lambda_i$  is
an eigenvalue   of  $\tilde{A}$,  for all $i\in \{ 2, \ldots, n\}$. Thus, Remark~\ref{eq:othcha} implies  that   $f:\mathcal L\cap\SP\to\R$ is spherically convex 
if, and  only if,   $g(x)= \langle \diag(a_{11}, \Lambda)x , x \rangle $ is spherically convex.    On the other hand,  using
Proposition~\ref{pr:CharMon}   we conclude that  $g(x)= \langle \diag(a_{11}, \Lambda)x , x \rangle $ is spherically convex if and only if  
$$
h(x)= \lng [\diag(a_{11}, \Lambda)-a_{11}I_{n}]x,x\rng= \lng [\Lambda -a_{11}{I_{n-1}}] \tilde{x}, \tilde{x}\rng,
$$
where $\textrm{ }x:=(x_1, \tilde{x})\in \R\times \R^{n-1}$, is spherically convex. Since $h$  is spherically convex, from 
Proposition~\ref{pr:CharMon} we have 
\begin{equation} \label{eq:ccthflor}
h(x)-h(y)=\lng  [\Lambda -a_{11}{I_{n-1}}] \tilde{x}, \tilde{x}\rng-\lng   [\Lambda -a_{11}{I_{n-1}}] \tilde{y}, \tilde{y} \rng \geq 0,  
\end{equation}
for all points $x=(x_1, \tilde{x})\in \SP$, $y=(y_1, \tilde{y})\in {\cal L}\cap \SP$  with $x\perp y$.  If we assume that $\lambda_2= \ldots =\lambda_n$, we have  $\Lambda=\lambda {I_{n-1}}$ and then $A=\diag(a, \lambda I_{n-1})$, where $a:=a_{11}$ and
$\lambda:=\lambda_2=\dots=\lambda_n$.  Thus  \eqref{eq:ccthflor} becomes $[ \lambda -a_{11}] [
\|\tilde{x}\|^2 -  \|\tilde{y}\|^2] \geq 0$.  Bearing in mind that  ${\cal L}={\cal L}^*$, Lemma~\ref{lem:CharLor} implies  $\|\tilde{x}\|^2 -
\|\tilde{y}\|^2 \geq  0$, and then   we  have from the previous two  inequalities  that  $ a=a_{11}\leq \lambda$. Therefore, for concluding the proof of this  implication  it remains  to prove that  $ a_{11}\leq \lambda_2= \ldots =\lambda_n$.  Without loss of generality we can assume that $n\geq 3$.  Let  $ x\in \SP$ and  $y\in {\cal
L}\cap \SP$  with   $x\perp y$  be  defined by
\begin{align} \label{eq:lcxy1}
x&=-\lf(\f1{\sqrt2}\cos\theta\rg)e^1+\lf(\f12\cos\theta-\f1{\sqrt2}\sin\theta\rg)e^i +\lf(\f12\cos\theta+\f1{\sqrt2}\sin\theta\rg)e^j, \\
 y&=\f1{\sqrt2}e^1+\f12 e^i+\f12 e^j,  \label{eq:lcxy2}
\end{align}
 where $\theta\in (0,\pi)$. From \eqref{eq:lcxy1} and \eqref{eq:lcxy2}, it is straightforward to check that $x\in \SP$, $y\in {\cal L}\cap \SP$
 and  $x\perp y$. Hence, \eqref{eq:ccthflor} becomes  
	\[
	\lf(\f14\sin^2\theta-\f1{\sqrt2}\cos\theta\sin\theta\rg)\lambda_i+\lf(\f14\sin^2\theta+\f1{\sqrt2}\cos\theta\sin\theta\rg)\lambda_j
	\geq0,
	\]
	or, after dividing by $\sin\theta\ne0$, that
	\begin{equation*}
		\lf(\f14\sin\theta-\f1{\sqrt2}\cos\theta\rg)\lambda_i+\lf(\f14\sin\theta+\f1{\sqrt2}\cos\theta\rg)\lambda_j\geq0.
	\end{equation*}
Letting $\theta$ goes to $0$ in the inequality above, we obtain  $\lambda_j\geq\lambda_i$. Hence,   by swapping $i$ and $j$ in
\eqref{eq:lcxy1} and \eqref{eq:lcxy2} we  can also prove that  $\lambda_{i} \geq \lambda_{j}$,  and then $\lambda_{i} = \lambda_{j}$,   for all
$i, j\neq 1$.  Therefore,  $\lambda_2= \ldots =\lambda_n$ which concludes  the implication. Conversely, assume that $A=\diag(a, \lambda
I_{n-1})$ and $\lambda \geq a$. Then $f(x)= \lng [\diag(a, \lambda I_{n-1}]x,x\rng$ and  Proposition~\ref{pr:CharMon}  implies that $f$ is
spherically convex if, and  only if,  
$$
h(x)= \lng [\diag(a, \lambda I_{n-1})-aI_{n}]x,x\rng= \lng [\lambda -a]{I_{n-1}}\tilde{x}, \tilde{x}\rng, 
$$
where $ x:=(x_1, \tilde{x})\in \R\times \R^{n-1} $, is spherically convex. Take  $x=(x_1, \tilde{x})\in \SP$ and  $y=(y_1, \tilde{y})\in {\cal L}\cap \SP$  with $x\perp y$. Thus,  from   Lemma~\ref{p-l} and \eqref{eq:LorentzCone} we have $\|\tilde{x}\|^2 \geq   \|\tilde{y}\|^2.$
Hence considering that    $a\leq \lambda$ we  conclude that   
$$
\lng [\lambda -a]{I_{n-1}}\tilde{x}, \tilde{x}\rng-  \lng [\lambda -a]{I_{n-1}}\tilde{y}, \tilde{y}\rng= [ \lambda -a] [ \|\tilde{x}\|^2 -  \|\tilde{y}\|^2] \geq 0.
$$ 
Therefore, Proposition~\ref{pr:CharMon}  implies   that $h$ is spherically convex and then  $f$ is also spherically convex.
\end{proof}

\begin{remark} 
	Assume  that $f$  in Theorem~\ref{th:ccflc}  is spherically convex  in $  {\cal L}\cap \SP$. Hence  there exist $a,\lambda \in\R$ with
	$\lambda \geq a$ such that $A=\diag(a, \lambda I_{n-1})$ and then  $f(x)= ax_{1}^2+ \lambda \|\tilde{x}\|^2=\lambda-(\lambda-a)x_1^2$, where 
	$x:=(x_1,\tilde{x})\in\mathcal L\cap\SP$. Hence, it is clear that the minimum of $f$ on $\mathcal L\cap\SP$ is obtained when $x_1$ is
	maximal, that is, when $x_1=1$, which happens exactly when $x=e^1$. Similarly, the maximum of $f$ on $\mathcal L\cap\SP$ is obtained when $x_1$ is
	minimal, that is, when $x_1=1/\sqrt2$ (see item (i) of Lemma \ref{lem:CharLor}), which happens exactly when $\|\tilde x\|=x_1=1/\sqrt2$.
	Hence, $\argmin \{ f (x) : x \in  {\cal L}\cap \SP\}=e^1$, $\min \{f (x) : x \in  {\cal L}\cap \SP\}=a$,  $\argmax \{ f (x) : x \in  {\cal L}\cap \SP\}=\lf\{\f1{\sqrt2}(1,\tilde x)\in\R\times\R^{n-1}:\|\tilde x\|=1\rg\}$ and $\max \{f (x) : x \in  {\cal L}\cap \SP\}=(a+\lambda)/2$.
\end{remark}

\begin{remark}
	If $\lambda> a$ then Theorem~\ref{th:ccflc} implies that   $f(x)= \langle\diag(a, \lambda I_{n-1})x, x\rangle$ is  spherically convex.
	However,  in this case   $\diag(a,\lambda, \ldots, \lambda)$ does  not have the ${\cal L}$-Lyapunov-like property. Hence, 
	Corollary~\ref{cr:stg} is not true if we only require  that the cone is superdual proper.  Indeed, the Lorentz cone ${\cal L}$ is 
	self-dual proper, i.e., ${\cal L}^*={\cal L}$ and consequently is superdual proper. Moreover,  letting   $ x, y \in {\cal L}\cap \SP$  
	with   $x\perp y$  be  defined by
	$$
		x=\frac{1}{\sqrt{2}}e^{1}+ \frac{1}{\sqrt{2}}e^{i},  \qquad y=\frac{1}{\sqrt{2}}e^{1}- \frac{1}{\sqrt{2}}e^{i},  \qquad i\in \{ 2,
		\ldots, n\},
	$$
we have $\lng \diag(a, \lambda I_{n-1})x,y\rng=(a-\lambda)/2<0$. Therefore, $\diag(a, \lambda I_{n-1})$ does not have the ${\cal L}$-Lyapunov-like
property, and the strong superduality of the cone   is necessary in  Corollary~\ref{cr:stg}. 
\end{remark}

\section{ Final remarks} \label{sec;FinalRemarks}
This paper is a continuation of   \cite{FerreiraIusemNemeth2013,FerreiraIusemNemeth2014}, where we studied some basic intrinsic properties of spherically convex functions on spherically convex sets of the sphere.  We expect that the results of this paper can  aid in the understanding of the behaviour of spherically convex functions on spherically convex sets of the sphere. In the future we will also study  spherically quasiconvex functions \cite{Nemeth1998} (see also  \cite{KaramardianSchaible1990} for the definition of quasiconvex functions) on spherically convex sets of the sphere.

\section*{Acknowledgments} The authors are grateful to Michal Ko\v cvara and Kay Magaard for many helpful conversations.

\end{document}